\newtheorem{theorem}{Theorem}[section]
\newtheorem{definition}{Definition}[section]
\newtheorem{lemma}{Lemma}[section]
\newcommand{\be}{\begin{equation}}
\newcommand{\ee}{\end{equation}}
\newcommand{\bea}{\begin{eqnarray}}
\newcommand{\eea}{\end{eqnarray}}
\newcommand{\beb}{\begin{eqnarray*}}
\newcommand{\eeb}{\end{eqnarray*}}
\numberwithin{equation}{section}
\author{Absos Ali Shaikh}
\address{Department of Mathematics,
Aligarh Muslim University,
Aligarh, Uttar Pradesh, INDIA}
\email{aask2003@yahoo.co.in, aashaikh@math.buruniv.ac.in }
\author{Ali Akbar}
\address{Department of Mathematics,
Rampurhat College,
Birbhum, West Bengal 731224, INDIA}
\email{aliakbar.akbar@rediffmail.com }
\begin{document}

\title[On Locally $\phi $-semisymmetric Kenmotsu Manifolds]{On Locally $\phi $-semisymmetric Kenmotsu Manifolds}
\maketitle

\vskip0.2in
\footnotesize MSC 2010 Classifications:53C25, 53D15.
\vskip0.1in
Keywords and phrases: Kenmotsu manifold, locally $\phi $- symmetric, $\phi $-semisymmetric, manifold of constant curvature.
\vskip0.1in
\normalsize

 {\bf Abstract} The object of the present paper is to study the locally $\phi $- semisymmetric Kenmotsu manifolds along with the characterization of such notion.

\section{Introduction}

Let $M$ be an n-dimensional, $n\geq 3$, connected smooth Riemannian manifold endowed with the Riemannian metric $g$. Let $ \nabla $, $R$, $S$ and
$r$ be the Levi-Civita connection, curvature tensor, Ricci tensor and the scalar curvature of $M$ respectively. The manifold $M$ is called locally symmetric due to Cartan (\cite{ce1}, \cite{ce2}) if the local geodesic symmetry at $p\in M$ is an isometry, which is equivalent to the fact that
$\nabla R=0$. Generalizing the concept of local symmetry, the notion of semisymmetric manifold was introduced by Cartan \cite{ce3} and fully classified by
Szabo (\cite{sz1}, \cite{sz2}, \cite{sz3}). The manifold $M$ is said to be semisymmetric if
$ (R(U,V).R)(X,Y)Z=0$, for all vector fields $X$, $Y$, $Z$, $U$, $V$ on $M$, where $R(U,V)$ is considered as the derivation of the tensor algebra at each point of $M$.

In 1977 Takahashi \cite{tt} introduced the notion of local $\phi $- symmetry on a Sasakian manifold. A Sasakian manifold is said to be locally $\phi $-symmetric if
\begin{equation} \phi ^2((\nabla_WR)(X,Y)Z)=0 ,\end{equation}
for all horizontal vector fields $X$, $Y$, $Z$, $W$ on $M$  that is all vector fields orthogonal to $\xi$, where $\phi $ is the structure tensor of the manifold $M$. The concept of local $\phi $- symmetry
on various structures and their generalizations or extension are studied in ( \cite{ak}, \cite{aks}, \cite{ats}, \cite{aats}). By extending the notion of semisymmetry and generalizing the concept of local $\phi $- symmetry of Takahashi \cite{tt}, the first author and his coauthor introduced \cite{aaha} the notion of local $\phi $-semisymmetry on a Sasakian  manifold. A Sasakian manifold $M$, $n\geq 3$, is said to be locally $\phi $-semisymmetric if
\begin{equation} \phi ^2((R(U,V).R)(X,Y)Z)=0, \end{equation}
for all horizontal vector fields $X$, $Y$, $Z$, $U$, $V$ on $M$. In the present paper we study locally $\phi $-semisymmetric Kenmotsu manifolds.
The paper is organized as follows:

In section 2 some rudimentary facts and curvature related properties of Kenmotsu manifolds are discussed. In section 3 we study locally $\phi $-semisymmetric Kenmotsu manifolds and obtained the characterization of such notion.

\section{Preliminaries}
Let $M$ be a $(2n+1)$-dimensional connected smooth manifold endowed with an almost contact
metric structure $(\phi ,\xi ,\eta ,g),$ where $\phi $ is a tensor field of type $(1,1),$ $\xi $
is a vector field, $\eta $ is an $1$-form and $g$ is a Riemannian metric on $M$ such that \cite{bde}
\begin{equation}  \phi ^2X=-X+\eta( X)\xi  ,\hspace{10 pt}\eta (\xi )=1.\label{21}\end{equation}
\begin{equation} g(\phi X,\phi Y)=g(X,Y)-\eta (X)\eta (Y)\hspace{10 pt} \label{22}\end{equation}
for all vector fields X, Y on $M$.\\
Then we have \cite{bde}
\begin{equation} \phi \xi =0,\hspace{10 pt}\eta (\phi X)=0,\hspace{10 pt}\eta (X)=g(X,\xi ).\label{23}\end{equation}
\begin{equation} g(\phi X,X)=0 \label{24a}.\end{equation}
\begin{equation} g(\phi X,Y)=-g(X,\phi Y)\label{24b}\end{equation}
for all vector fields X, Y on $M$.\\
If
\begin{equation} (\nabla_X \phi )Y=-g(X, \phi Y)\xi -\eta (Y)\phi X, \label{24} \end{equation}
\begin{equation}\nabla_X \xi=X - \eta (X)\xi,\label{25}\end{equation}
holds on $M$, then it is called a Kenmotsu manifold \cite{kk}.\\
In a Kenmotsu manifold the following relations hold \cite{kk}
\begin{equation} (\nabla_X \eta )Y= g(X, Y) -\eta (X)\eta (Y),\label{26}\end{equation}
\begin{equation}  \eta (R(X, Y)Z)=g(X, Z)\eta (Y)-g(Y, Z)\eta (X), \label{27} \end{equation}
\begin{equation} R(X, Y)\xi =\eta (X)Y-\eta (Y)X, \label{28} \end{equation}
\begin{equation} R(X, \xi )Z =g(X,Z)\xi -\eta (Z)X, \label{29} \end{equation}
\begin{equation} R(X, \xi )\xi  =\eta (X)\xi -X, \label{29d} \end{equation}
\begin{equation} S(X,\xi )=-2n\eta (X), \label{29a}\end{equation}
\begin{equation} (\nabla_WR)(X, Y)\xi = g(X, W)Y -g(Y, W)X- R(X, Y)W,\label{29b}\end{equation}
\begin{equation} (\nabla_WR)(X, \xi )Z = g(X, Z)W -g(W, Z)X- R(X, W)Z,\label{29c}\end{equation}
for all vector fields $X$, $Y$, $Z$ and $W$ on $M$.\\
In a Kenmotsu manifold we also have \cite{kk}
\begin{equation} \begin{array}{rcl} R(X,Y)\phi W&=&g(Y,W)\phi X-g(X,W)\phi Y+g(X,\phi W)Y-g(Y,\phi W)X\\
&+&\phi R(X,Y)W .\end{array}\label{29e}\end{equation}
Applying $\phi $ and using (\ref{21}) we get from (\ref{29e})
\begin{equation} \begin{array}{rcl} \phi R(X,Y)\phi W&=&-g(Y,W)X+g(X,W)Y+g(X,\phi W)\phi Y-g(Y,\phi W)\phi X\\
&-&R(X,Y)W .\end{array}\label{29f}\end{equation}
In view of (\ref{29f}) we obtain from (\ref{29b})
\begin{equation}(\nabla_WR)(X, Y)\xi =g(Y,\phi W)\phi X-g(X,\phi W)\phi Y+\phi R(X,Y)\phi W.\label{29g}\end{equation}

\section{ Locally $\phi $-semisymmetric Kenmotsu Manifolds}

 \begin{definition} A Kenmotsu manifold $M$ is said to be locally $\phi $-semisymmetric if
\begin{equation} \phi ^2((R(U,V).R)(X,Y)Z)=0,  \label{302} \end{equation}
for all horizontal vector fields $X$, $Y$, $Z$, $U$, $V$ on $M$.
\end{definition}
First we suppose that $M$ is a Kenmotsu manifold such that
\begin{equation} \phi ^2((R(U,V).R)(X,Y)\xi )=0,  \label{303}\end{equation}
for all horizontal vector fields $X$, $Y$, $U$ and $V$ on $M$.

Differentiating (\ref{29g}) covariantly with respect to a horizontal vector field $U$, we get
\begin{equation} \begin{array} {rcl} && (\nabla_U\nabla_VR)(X, Y)\xi \\
&=&[g(X,\phi V)g(U,\phi Y)-g(Y,\phi V)g(U,\phi X)+g(\phi U, R(X,Y)\phi V)]\xi \\
&+&\phi (\nabla_U R)(X,Y)\phi V.\end{array}\label{30}\end{equation}
Using (\ref{29e}) we obtain from (\ref{30})
\begin{equation} \begin{array} {rcl}(\nabla_U\nabla_VR)(X, Y)\xi &=&[g(Y,V)g(U,X)-g(X,V)g(U,Y)+g(R(X,Y)V,U)]\xi \\
&+&\phi (\nabla_U R)(X,Y)\phi V.\end{array}\label{31}\end{equation}
Interchanging $U$ and $V$ on (\ref{31}) we get
\begin{equation} \begin{array} {rcl}(\nabla_V\nabla_UR)(X, Y)\xi &=&[g(Y,U)g(V,X)-g(X,U)g(V,Y)+g(R(X,Y)U,V)]\xi \\
&+&\phi (\nabla_V R)(X,Y)\phi U.\end{array}\label{32}\end{equation}
From (\ref{31}) and (\ref{32}) it follows that
\begin{equation} \begin{array} {rcl} (R(U,V).R)(X,Y)\xi  &=&2[g(Y,V)g(U,X)-g(X,V)g(U,Y)-R(X, Y, U, V)]\xi \\
&+&\phi \{(\nabla_U R)(X,Y)\phi V- (\nabla_V R)(X,Y)\phi U\} .\end{array}\label{33}\end{equation}
Again from (\ref{303}) we have
\begin{equation} (R(U,V).R)(X,Y)\xi =0,  \label{34}\end{equation}
From (\ref{33}) and (\ref{34}) we have
\begin{equation} \begin{array} {rcl}&& 2[g(Y,V)g(U,X)-g(X,V)g(U,Y)-R(X, Y, U, V)]\xi \\
&+&\phi \{(\nabla_U R)(X,Y)\phi V- (\nabla_V R)(X,Y)\phi U\} \\
&=& 0 .\end{array}\label{35}\end{equation}
Applying $\phi $ on (\ref{35}) and using (\ref{29e}), (\ref{29g}) and (\ref{23}) we get
\begin{equation} (\nabla_U R)(X,Y)\phi V- (\nabla_V R)(X,Y)\phi U =0.\label{36} \end{equation}
In view of (\ref{35}) and (\ref{36}) we get
\begin{equation}  R(X, Y, U, V)=g(Y,V)g(U,Y)-g(X,V)g(U,Y) ,\label{37}\end{equation}
\begin{equation}  R(X, Y, U, V)=-\{ g(X,V)g( U,Y)-g(Y,V)g(U,X)\} ,\label{37i}\end{equation}
for all horizontal vector fields $X$, $Y$, $U$ and $V$ on $M$. Hence M is of constant $\phi $-holomorphic sectional curvature -1
and hence of constant curvature -1. This leads to the following:
\begin{theorem} If a Kenmotsu manifold $M$ satisfies the condition $ \phi ^2((R(U,V).R)(X,Y)\xi)=0 $, for all horizontal
vector fields $X$, $Y$, $Z$, $U$ and $V$ on $M$, then $M$ is a manifold of constant curvature -1.
\end{theorem}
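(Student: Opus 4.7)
The plan is to compute $(R(U,V)\cdot R)(X,Y)\xi$ explicitly, split the result into its $\xi$-valued part and its horizontal ($\phi$-image) part, and then invoke the hypothesis to force each part to vanish. This will yield the sectional-curvature identity $R(X,Y,U,V) = g(Y,V)g(U,X) - g(X,V)g(U,Y)$ on horizontal pairs, from which constant curvature $-1$ follows.

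The main computation is to differentiate the Kenmotsu identity (\ref{29g}) covariantly along a horizontal field $U$. This uses (\ref{24}) for the $\nabla_U\phi$-contributions, (\ref{25}) together with $\eta(U)=0$ to give $\nabla_U\xi = U$, and (\ref{27})--(\ref{29}) to simplify the $\eta\circ R$ and $R(\cdot,\xi)$ terms, while (\ref{29f}) is used to rewrite $\phi R(X,Y)\phi V$. The outcome should be $(\nabla_U\nabla_V R)(X,Y)\xi = [g(Y,V)g(U,X) - g(X,V)g(U,Y) + g(R(X,Y)V,U)]\xi + \phi(\nabla_U R)(X,Y)\phi V$. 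Antisymmetrizing in $U,V$, and using the skew-symmetry $g(R(X,Y)V,U) - g(R(X,Y)U,V) = -2R(X,Y,U,V)$, expresses $(R(U,V)\cdot R)(X,Y)\xi$ as $2[g(Y,V)g(U,X) - g(X,V)g(U,Y) - R(X,Y,U,V)]\xi + \phi\{(\nabla_U R)(X,Y)\phi V - (\nabla_V R)(X,Y)\phi U\}$.

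The hypothesis forces the $\phi^2$-image of this expression to vanish; since $\phi^2\xi = 0$ while $\phi^2$ acts as $-I$ on the horizontal bundle, the horizontal piece must be zero and the whole expression is then zero. Applying $\phi$ once more kills the $\xi$-term (via $\phi\xi = 0$ from (\ref{23})) and, using (\ref{29e}), (\ref{29g}), and (\ref{23}), reduces $\phi^2$ on the horizontal bracket to $-1$ times itself, producing $(\nabla_U R)(X,Y)\phi V = (\nabla_V R)(X,Y)\phi U$. Substituting this back into the antisymmetrized identity forces the $\xi$-coefficient to vanish, giving $R(X,Y,U,V) = g(Y,V)g(U,X) - g(X,V)g(U,Y)$ for all horizontal $X,Y,U,V$. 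Combined with (\ref{28}), which already determines $R(X,Y)\xi$ as $\eta(X)Y - \eta(Y)X$, one concludes that $M$ has constant sectional curvature $-1$.

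The main obstacle I anticipate is the bookkeeping when differentiating (\ref{29g}): several $g(X,\phi V)g(U,\phi Y)$-type cross-terms arising from the $\nabla\phi$-contributions must be shown to cancel cleanly so that only the compact scalar coefficient of $\xi$ displayed above survives. A secondary delicate point is the reduction of $\phi^2$ on the horizontal bracket $(\nabla_U R)(X,Y)\phi V - (\nabla_V R)(X,Y)\phi U$ to $-1$ times itself; this requires checking that the $\eta$-image of the bracket is zero, which I would verify using $\eta(R(X,Y)\phi V) = 0$ (a consequence of (\ref{28}) and horizontality of $X,Y$) together with the Kenmotsu relation (\ref{26}) governing $\nabla\eta$.
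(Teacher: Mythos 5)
Your proposal reproduces the paper's argument essentially step for step: you differentiate (\ref{29g}) to get $(\nabla_U\nabla_V R)(X,Y)\xi$ as in (\ref{31}), antisymmetrize in $U,V$ to arrive at the identity (\ref{33}) for $(R(U,V).R)(X,Y)\xi$, apply $\phi$ under the hypothesis to obtain $(\nabla_U R)(X,Y)\phi V=(\nabla_V R)(X,Y)\phi U$ as in (\ref{36}), and then read off $R(X,Y,U,V)=g(Y,V)g(U,X)-g(X,V)g(U,Y)$ as in (\ref{37i}), hence constant curvature $-1$. The only point you pass over quickly --- that the $\xi$-coefficient in (\ref{33}) must also vanish, i.e.\ that $\phi^2A=0$ here really gives $A=0$ rather than merely $A=\eta(A)\xi$ --- is treated with exactly the same brevity in the paper's passage from (\ref{303}) to (\ref{34}), and is justified by checking $\eta((R(U,V).R)(X,Y)\xi)=0$ via (\ref{27}) and (\ref{28}) for horizontal arguments.
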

We consider a Kenmotsu manifold which is locally $\phi $-semisymmetric. Then from (\ref{302}) we have
\begin{equation} (R(U,V).R)(X,Y)Z=g((R(U,V).R)(X,Y)Z,\xi )\xi , \label{38} \end{equation}
from which we get
\begin{equation} (R(U,V).R)(X,Y)Z=-g((R(U,V).R)(X,Y)\xi, Z )\xi  \label{39} \end{equation}
for all horizontal vector fields $X$, $Y$, $Z$, $U$, $V$ on $M$.

Now taking inner product on both side of (\ref{33}) with a horizontal vector field $Z$, we obtain
\begin{equation} g((R(U,V).R)(X,Y)\xi, Z) =g(\phi (\nabla_U R)(X,Y)\phi V,Z)- g(\phi (\nabla_V R)(X,Y)\phi U,Z) .\label{39a}\end{equation}
Using (\ref{24b}) and (\ref{39}) we get from (\ref{39a})
\begin{equation} (R(U,V).R)(X,Y)Z=[g((\nabla_U R)(X,Y)\phi V,\phi Z)- g((\nabla_V R)(X,Y)\phi U,\phi Z)]\xi  \label{39b} \end{equation}
Differentiating (\ref{29e}) covariantly with respect to a horizontal vector field $V$, we get
\begin{equation} \begin{array} {rcl} && (\nabla_VR)(X, Y)\phi Z \\
 &=&[-g(Y,Z)g(V,\phi X)+g(X,Z)g(V,\phi Y)-g(V, R(X,Y)Z)]\xi \\
&+&\phi (\nabla_V R)(X,Y)Z.\end{array}\label{39c}\end{equation}
Taking inner product on both sides of (\ref{39c}) with a horizontal vector field $U$, we obtain
\begin{equation} g\{(\nabla_VR)(X, Y)\phi Z,U\} =g\{\phi (\nabla_VR)(X, Y) Z, U\}.\label{39d}\end{equation}
Using (\ref{24b}) we get from above
\begin{equation} g\{(\nabla_VR)(X, Y)\phi Z,U\} =-g\{ (\nabla_VR)(X, Y) Z, \phi U\}.\label{39e}\end{equation}
In view of (\ref{39e}) we obtain from (\ref{39b})
\begin{equation} (R(U,V).R)(X,Y)Z =[-g((\nabla_U R)(X,Y) V,\phi^2 Z)+ g((\nabla_V R)(X,Y)U,\phi^2 Z)]\xi, \label{39f} \end{equation}
which implies that
\begin{equation} (R(U,V).R)(X,Y)Z =[g((\nabla_U R)(X,Y) V, Z)- g((\nabla_V R)(X,Y)U,Z)]\xi, \label{39h} \end{equation}
i.e.
\begin{equation} (R(U,V).R)(X,Y)Z =[-(\nabla_U R)(X,Y,Z,V)+ (\nabla_V R)(X,Y,Z,U)]\xi ,\label{39i} \end{equation}
for any horizontal vector field $X$, $Y$, $Z$, $U$, $V$ on $M$. Hence we can state the following:
\begin{theorem} A Kenmotsu manifold $M$, $n\geq3$, is locally $\phi $-semisymmetric
if and only if the relation (\ref{39i}) holds for all horizontal vector fields $X$, $Y$, $Z$, $U$, $V$  on $M$.\end{theorem}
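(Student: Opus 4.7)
The plan is to prove both implications separately, and most of the work for the forward direction has already been assembled in the computations leading up to the statement; the converse turns out to be almost immediate.

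For the forward (``only if'') direction, I would assume $\phi^2((R(U,V).R)(X,Y)Z)=0$ for all horizontal $X,Y,Z,U,V$. The identity (\ref{21}) gives $\phi^2 W = -W + \eta(W)\xi$, so $\phi^2 W = 0$ exactly when $W = \eta(W)\xi$; hence the hypothesis yields (\ref{38}), and using the skew-symmetry of $R(U,V)$ as a derivation we pass to (\ref{39}). The key move is then to bring in equation (\ref{33}), which expresses $(R(U,V).R)(X,Y)\xi$ as a scalar multiple of $\xi$ plus the horizontal term $\phi\{(\nabla_U R)(X,Y)\phi V - (\nabla_V R)(X,Y)\phi U\}$. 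Taking the inner product with the horizontal $Z$ kills the $\xi$-component (since $\eta(Z)=0$) and leaves (\ref{39a}). Next, to convert $\phi$ applied to a derivative into a derivative applied to $\phi$, I would differentiate (\ref{29e}) to get (\ref{39c}) and pair it with $U$ to obtain (\ref{39d})--(\ref{39e}). Substituting back yields (\ref{39f}), and the Kenmotsu relation $\phi^2 Z = -Z + \eta(Z)\xi = -Z$ (valid since $Z$ is horizontal) converts this into (\ref{39h}), which is just (\ref{39i}) in index notation.

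For the converse (``if'') direction, suppose (\ref{39i}) holds for all horizontal $X,Y,Z,U,V$. Then $(R(U,V).R)(X,Y)Z$ is a scalar multiple of $\xi$. Applying $\phi^2$ and using (\ref{21}) together with $\phi\xi = 0$ from (\ref{23}), we get
\begin{equation*}
\phi^2\bigl((R(U,V).R)(X,Y)Z\bigr) \;=\; \bigl[-(\nabla_U R)(X,Y,Z,V)+(\nabla_V R)(X,Y,Z,U)\bigr]\,\phi^2\xi \;=\; 0,
\end{equation*}
since $\phi^2\xi = -\xi + \eta(\xi)\xi = 0$. This is precisely the definition (\ref{302}) of local $\phi$-semisymmetry, completing the equivalence.

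The main technical obstacle is the forward direction, specifically the careful juggling of $\phi$ across covariant derivatives of $R$. The trick is to recognize that (\ref{29e}) and (\ref{29g})---the two Kenmotsu identities relating $\phi$ with $R$ and with $\nabla R$---together let us commute $\phi$ past $\nabla R$ up to controlled $\xi$-valued error terms, and those error terms vanish after pairing with a horizontal vector. Everything else is bookkeeping on formulas already displayed in the paper.
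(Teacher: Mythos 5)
Your proposal is correct and follows essentially the same route as the paper: the forward direction retraces the chain (\ref{302}) $\Rightarrow$ (\ref{38}) $\Rightarrow$ (\ref{39}), then pairs (\ref{33}) with a horizontal $Z$ and uses the differentiated identity (\ref{39c}) together with (\ref{24b}) and $\phi^2 Z=-Z$ to arrive at (\ref{39i}). Your explicit converse (that a $\xi$-valued right-hand side is annihilated by $\phi^2$ since $\phi\xi=0$) is the same trivial observation the paper leaves implicit, so nothing further is needed.
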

\section{ Characterization of  Locally $\phi $-semisymmetric Kenmotsu Manifolds}
In this section we investigate the condition of local $\phi $-semisymmetry of a  Kenmotsu manifold for arbitrary vector fields
on $M$. To find this we need the following results.
\begin{lemma} For any horizontal vector field $X$, $Y$ and $Z$ on a Kenmotsu manifold $M$, we have
\begin{equation} (\nabla_\xi R)(X,Y)Z=(\ell  _\xi R)(X,Y)Z+2R(X,Y)Z.\end{equation}\end{lemma}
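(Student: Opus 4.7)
The plan is to expand both $(\nabla_\xi R)(X,Y)Z$ and $(\ell_\xi R)(X,Y)Z$ directly from their definitions as derivations on the tensor algebra, and then exploit the Kenmotsu structure equation (\ref{25}), which forces $\nabla_V\xi = V$ whenever $V$ is horizontal.

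First I would write
\beb
(\nabla_\xi R)(X,Y)Z &=& \nabla_\xi\bigl(R(X,Y)Z\bigr) - R(\nabla_\xi X,Y)Z - R(X,\nabla_\xi Y)Z - R(X,Y)\nabla_\xi Z,\\
(\ell_\xi R)(X,Y)Z &=& \ell_\xi\bigl(R(X,Y)Z\bigr) - R([\xi,X],Y)Z - R(X,[\xi,Y])Z - R(X,Y)[\xi,Z],
\eeb
using $\ell_\xi V = [\xi,V]$ on vector fields. Since the Levi-Civita connection is torsion-free, $[\xi,V] = \nabla_\xi V - \nabla_V\xi$ for every vector field $V$, which rewrites every bracket in the second display in a form directly comparable to the first.

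The second step is to apply (\ref{25}). For horizontal $X,Y,Z$ one has $\eta(X)=\eta(Y)=\eta(Z)=0$, so $\nabla_X\xi = X$, $\nabla_Y\xi = Y$ and $\nabla_Z\xi = Z$. Moreover, by (\ref{27}), $\eta\bigl(R(X,Y)Z\bigr) = g(X,Z)\eta(Y)-g(Y,Z)\eta(X)=0$, so $R(X,Y)Z$ is itself horizontal; consequently $\nabla_{R(X,Y)Z}\xi = R(X,Y)Z$. Substituting these identities into the bracket terms reduces the comparison to a clean algebraic counting.

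The third step is to subtract. The $\nabla_\xi$-derivative pieces cancel in pairs; each of the three bracket terms $[\xi,X]$, $[\xi,Y]$, $[\xi,Z]$ contributes one copy of $R(X,Y)Z$; and the term $\ell_\xi\bigl(R(X,Y)Z\bigr) = \nabla_\xi(R(X,Y)Z) - \nabla_{R(X,Y)Z}\xi$ contributes one further copy with the opposite sign from the $-\nabla_{R(X,Y)Z}\xi$ piece. Collecting these copies yields the stated scalar multiple of $R(X,Y)Z$, proving the lemma.

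The only real obstacle is careful sign bookkeeping, since four terms each contribute a copy of $R(X,Y)Z$ and they must be combined correctly. The one nontrivial conceptual input is the horizontality of $R(X,Y)Z$: without (\ref{27}) the expression $\nabla_{R(X,Y)Z}\xi$ would carry an extra $\eta\bigl(R(X,Y)Z\bigr)\xi$ component and the identification of the difference with a scalar multiple of $R(X,Y)Z$ would break down.
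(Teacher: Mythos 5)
Your strategy is essentially the paper's argument with the scaffolding removed: the paper extends $X$, $Y$, $Z$ to $\xi$-invariant horizontal fields so that all Lie-bracket correction terms in $(\ell_\xi R)(X,Y)Z$ vanish and then compares with the covariant derivative, whereas you keep arbitrary horizontal fields and use torsion-freeness to write $\nabla_\xi V-[\xi,V]=\nabla_V\xi$. Both routes rest on exactly the two inputs you identify, namely $\nabla_V\xi=V$ for horizontal $V$ (from (\ref{25})) and the horizontality of $R(X,Y)Z$ (from (\ref{27})), so as a method your proposal is sound and, if anything, slightly cleaner than the invariant-extension device.

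The gap is in the one step you defer to ``sign bookkeeping'': the four copies of $R(X,Y)Z$ do not combine to the stated $+2$. Writing $D=(\nabla_\xi R)(X,Y)Z-(\ell_\xi R)(X,Y)Z$, the leading terms give $\nabla_\xi\bigl(R(X,Y)Z\bigr)-[\xi,R(X,Y)Z]=\nabla_{R(X,Y)Z}\xi=+R(X,Y)Z$, while each slot term gives $-R(\nabla_\xi X-[\xi,X],Y)Z=-R(\nabla_X\xi,Y)Z=-R(X,Y)Z$, and similarly for the $Y$ and $Z$ slots; hence $D=(1-3)R(X,Y)Z=-2R(X,Y)Z$, i.e.\ $(\nabla_\xi R)(X,Y)Z=(\ell_\xi R)(X,Y)Z-2R(X,Y)Z$, the opposite sign from the lemma as printed. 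You are in good company: the paper's own displayed chain terminates at $(\ell_\xi R)(X^\ast,Y^\ast)Z^\ast=(\nabla_\xi R)(X^\ast,Y^\ast)Z^\ast+2R(X^\ast,Y^\ast)Z^\ast$, which agrees with the $-2$ version and contradicts the equality it then claims to have proved, so the statement itself appears to carry a sign slip. But because your write-up asserts that the copies ``yield the stated scalar multiple,'' you have reproduced the discrepancy rather than caught it; the bookkeeping you wave at is precisely the point where a proof of the statement as written breaks down.
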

\begin{proof} Let $X^\ast$, $Y^\ast$ and $Z^\ast$ be $\xi $- invariant horizontal vector field extensions on $X$, $Y$ and $Z$ respectively.
Since $X^\ast $ is $\xi $- invariant of $X$, we get by using (\ref{25}) 
\begin{equation} \nabla _\xi X^\ast =\nabla_X{^\ast}\xi  =X^\ast \label{41}\end{equation}
Now making use of invariance of $X^\ast$, $Y^\ast$ and $Z^\ast$ by $\xi $ and using (\ref{41}) we get
\begin{equation} \begin{array} {rcl} (\ell  _\xi R)(X^\ast ,Y^\ast )Z^\ast &=&[\xi ,R(X^\ast ,Y^\ast )Z^\ast ]\\
&=&\nabla_\xi (R(X^\ast ,Y^\ast )Z^\ast )-\nabla_{R(X^\ast ,Y^\ast )Z^\ast }\xi \\
&=&(\nabla_\xi R)(X^\ast ,Y^\ast )Z^\ast +R(\nabla_\xi X^\ast ,Y^\ast )Z^\ast +R(X^\ast ,\nabla_\xi Y^\ast )Z^\ast\\
&+&R(X^\ast ,Y^\ast )\nabla_\xi Z^\ast -R(X^\ast ,Y^\ast )Z^\ast \\
&=&(\nabla_\xi R)(X^\ast ,Y^\ast )Z^\ast +R( X^\ast ,Y^\ast )Z^\ast +R(X^\ast ,Y^\ast )Z^\ast\\
&+&R(X^\ast ,Y^\ast )Z^\ast -R(X^\ast ,Y^\ast )Z^\ast \\
&=&(\nabla_\xi R)(X^\ast ,Y^\ast )Z^\ast +2R( X^\ast ,Y^\ast )Z^\ast \end{array} \end{equation}
Hence we get the conclusion.\end{proof}
\begin{lemma} For any vector field $X$, $Y$ and $Z$ on a Kenmotsu manifold $M$ we have
\begin{equation} \begin{array} {rcl} R(\phi ^2X,\phi ^2Y)\phi ^2Z&=&-R(X,Y)Z+\eta (Z)\{\eta (X)Y-\eta (Y)X\}\\
&+&\{\eta (Y)g(X,Z)-\eta (X)g(Y,Z)\}\xi \end{array} \end{equation}\end{lemma}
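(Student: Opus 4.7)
The plan is a direct computation, expanding $\phi^2$ in each argument of $R$ via the fundamental identity $\phi^2 W = -W + \eta(W)\xi$ from (\ref{21}), and then collapsing the resulting multilinear expansion using the Kenmotsu curvature identities (\ref{28}) and (\ref{29}). No induction or case analysis is needed; the computation has a natural three-step structure corresponding to the three occurrences of $\phi^2$.

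First I would write
\[
R(\phi^2X,\phi^2Y)\phi^2 Z = R\bigl(-X+\eta(X)\xi,\,-Y+\eta(Y)\xi\bigr)\bigl(-Z+\eta(Z)\xi\bigr),
\]
and expand by multilinearity. The key simplification, which I would single out as a small sublemma, is that $\eta(\phi^2 W) = 0$ for any $W$ (apply $\eta$ to (\ref{21}) and use $\eta(\xi)=1$). Combined with (\ref{28}), this gives $R(\phi^2 X,\phi^2 Y)\xi = \eta(\phi^2 X)\phi^2 Y - \eta(\phi^2 Y)\phi^2 X = 0$. Hence the $\eta(Z)\xi$ piece of $\phi^2 Z$ contributes nothing, and only the $-Z$ term survives:
\[
R(\phi^2X,\phi^2Y)\phi^2 Z \;=\; -R(\phi^2X,\phi^2Y)Z.
\]

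Next I would expand the right-hand side in the first two slots. Antisymmetry of $R$ kills the $R(\xi,\xi)$ term, so
\[
R(\phi^2X,\phi^2Y)Z \;=\; R(X,Y)Z \;-\; \eta(X)R(\xi,Y)Z \;-\; \eta(Y)R(X,\xi)Z.
\]
Now I would substitute (\ref{29}) directly: $R(X,\xi)Z = g(X,Z)\xi - \eta(Z)X$, and correspondingly $R(\xi,Y)Z = -g(Y,Z)\xi + \eta(Z)Y$. Plugging these in and regrouping the $\xi$-terms and the $\eta(Z)$-terms yields exactly
\[
-R(X,Y)Z + \eta(Z)\{\eta(X)Y-\eta(Y)X\} + \{\eta(Y)g(X,Z)-\eta(X)g(Y,Z)\}\xi,
\]
which is the desired formula.

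The only obstacle is bookkeeping of signs in the expansion; there is no geometric subtlety. The argument requires no appeal to the covariant derivatives (\ref{24})--(\ref{26}) nor to the more elaborate identities (\ref{29e})--(\ref{29g}); it rests entirely on the algebraic structure equations (\ref{21}), (\ref{28}), (\ref{29}) together with the observation $\eta\circ\phi^2 = 0$.
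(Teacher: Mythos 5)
Your proof is correct: the observation $\eta\circ\phi^2=0$ combined with $R(X,Y)\xi=\eta(X)Y-\eta(Y)X$ kills the $\eta(Z)\xi$ term, and the remaining expansion via $R(X,\xi)Z=g(X,Z)\xi-\eta(Z)X$ yields exactly the stated identity with the right signs. The paper states this lemma without proof, and your direct multilinear expansion from (\ref{21}), (\ref{28}) and (\ref{29}) is precisely the routine computation that is evidently intended.
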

Now lemma (4.1) and lemma (4.2) together imply the following:
\begin{lemma} For any vector field $X$, $Y$, $Z$ and $U$ on a Kenmotsu manifold $M$, we have
\be \begin {array} {rcl}&&(\nabla_{\phi ^2U} R)(\phi ^2X,\phi ^2Y)\phi ^2Z \\
&=& (\nabla_U R)(X,Y)Z-\eta (X)H_1(Y,U)Z+\eta (Y)H_1(X,U)Z+\eta (Z)H_1(X,Y)U\\
&+&\eta (U)[\eta (Z)\{\eta (X)\ell _\xi Y-\eta (Y)\ell _\xi X\}-(\ell _\xi R)(X,Y)Z]\\
&+&2\eta (U)[R(X,Y)Z-\eta (Z)\{\eta (X)Y-\eta (Y)X\}\\
&-&\{\eta (Y)g(X,Z)-\eta (X)g(Y,Z)\}\xi].\end{array} \label{42} \end{equation}
where the tensor field $H_1$ of type (1, 3) is given by
\begin{equation} H_1(X,Y)Z=R(X,Y)Z-g(X,Z)Y+g(Y,Z)X, \label{4.66} \end{equation}
for all vector fields $X$, $Y$, $Z$ on $M$.
\end{lemma}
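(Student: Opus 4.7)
My plan is to reduce the left-hand side of (\ref{42}) by systematically decomposing each occurrence of $\phi^2$ via $\phi^2 W = -W + \eta(W)\xi$ and then invoking Lemmas~4.1 and~4.2 for the pieces that survive. Using linearity of $\nabla$ in its direction slot, I first split
\[
(\nabla_{\phi^2 U} R)(\phi^2 X, \phi^2 Y)\phi^2 Z = -(\nabla_U R)(\phi^2 X, \phi^2 Y)\phi^2 Z + \eta(U)(\nabla_\xi R)(\phi^2 X, \phi^2 Y)\phi^2 Z,
\]
and handle the two summands separately.

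For the first summand I would expand $(\nabla_U R)(\phi^2 X, \phi^2 Y)\phi^2 Z$ by trilinearity. The mixed $\xi$-contributions collapse dramatically: $(\nabla_U R)(\xi, \xi)(\cdot) = 0$ by antisymmetry in the first pair, while setting $Z = \xi$ in (\ref{29c}) gives $(\nabla_U R)(X, \xi)\xi = 0 = (\nabla_U R)(\xi, Y)\xi$. The three surviving evaluations $(\nabla_U R)(X, Y)\xi$, $(\nabla_U R)(X, \xi)Z$, and $(\nabla_U R)(\xi, Y)Z$ can be read off from (\ref{29b}), (\ref{29c}) and the antisymmetry of $R$, and are recognized by (\ref{4.66}) as $-H_1(X, Y)U$, $-H_1(X, U)Z$, and $H_1(Y, U)Z$ respectively. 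After the overall sign from $\phi^2 U = -U + \eta(U)\xi$, this reproduces the first four terms on the right-hand side of (\ref{42}).

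For the second summand, the decisive observation is that $\eta(\phi^2 W) = 0$, so $\phi^2 X$, $\phi^2 Y$, $\phi^2 Z$ are horizontal and Lemma~4.1 applies verbatim to yield
\[
(\nabla_\xi R)(\phi^2 X, \phi^2 Y)\phi^2 Z = (\ell_\xi R)(\phi^2 X, \phi^2 Y)\phi^2 Z + 2 R(\phi^2 X, \phi^2 Y)\phi^2 Z.
\]
Direct substitution of Lemma~4.2 into the second piece gives, after multiplication by $\eta(U)$, exactly the final $2\eta(U)[\,\cdot\,]$ block of (\ref{42}). For the Lie-derivative piece I would again expand trilinearly, obtaining $-(\ell_\xi R)(X, Y)Z$ together with the $\xi$-valent corrections $(\ell_\xi R)(X, Y)\xi$, $(\ell_\xi R)(X, \xi)Z$, $(\ell_\xi R)(\xi, Y)Z$, which I unpack via the tensoriality identity $(\ell_\xi R)(A, B)C = \ell_\xi(R(A, B)C) - R(\ell_\xi A, B)C - R(A, \ell_\xi B)C - R(A, B)\ell_\xi C$ combined with (\ref{28})--(\ref{29d}). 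The factors $\ell_\xi X$ and $\ell_\xi Y$ appearing in the $\eta(U)\eta(Z)$-block of (\ref{42}) arise precisely from $\ell_\xi$ hitting the $X$ and $Y$ slots of $R(X, Y)\xi = \eta(X)Y - \eta(Y)X$.

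The main bookkeeping obstacle is this last expansion: it generates many $\eta(\cdot)$-weighted correction terms, and verifying that every term not explicitly written in (\ref{42}) either cancels in pairs or collapses into a listed contribution requires careful repeated use of $\ell_\xi \xi = 0$, of $d\eta = 0$ (so that $\xi(\eta(X)) = \eta(\ell_\xi X)$ in the Kenmotsu setting), and of the closed-form identities (\ref{28})--(\ref{29d}). Once this routine-but-lengthy accounting is complete, combining the two summands produces (\ref{42}).
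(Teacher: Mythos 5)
Your overall strategy --- expand every $\phi^2$ via (\ref{21}), split off the $\eta(U)(\nabla_\xi R)$ piece in the direction slot, and feed the result to Lemmas 4.1 and 4.2 --- is exactly what the paper intends (the paper offers no written proof beyond the remark that Lemmas 4.1 and 4.2 imply the statement), and your treatment of the first summand $-(\nabla_UR)(\phi^2X,\phi^2Y)\phi^2Z$ via (\ref{29b}), (\ref{29c}) and the vanishing of $(\nabla_UR)(X,\xi)\xi$ is correct and does reproduce the first line of (\ref{42}). The gap is in the second summand, and it is not a matter of deferred bookkeeping. Lemma 4.2 gives $R(\phi^2X,\phi^2Y)\phi^2Z=-R(X,Y)Z+\eta(Z)\{\eta(X)Y-\eta(Y)X\}+\{\eta(Y)g(X,Z)-\eta(X)g(Y,Z)\}\xi$, so the contribution $2\eta(U)R(\phi^2X,\phi^2Y)\phi^2Z$ is the \emph{negative} of the final $2\eta(U)[\,\cdot\,]$ block of (\ref{42}), not ``exactly'' that block as you assert. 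That something must be off is visible from the sanity check $U=\xi$ with $X$, $Y$, $Z$ horizontal: then $\phi^2U=0$, so the left side of (\ref{42}) vanishes, while the right side reduces, using Lemma 4.1, to $4R(X,Y)Z\neq 0$ in general. So either your substitution step is wrong or the printed formula is; in fact the $2\eta(U)[\,\cdot\,]$ block of (\ref{42}) carries the wrong sign, and a correct execution of your plan cannot terminate at (\ref{42}) as stated.

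The Lie-derivative part you postpone also does not come out the way you describe. Since $(\nabla_X\eta)Y$ is symmetric in $X$ and $Y$ by (\ref{26}), one has $\ell_\xi\eta=0$ (this is your own observation $\xi(\eta(X))=\eta(\ell_\xi X)$), and therefore $(\ell_\xi R)(X,Y)\xi=(\ell_\xi\eta)(X)Y-(\ell_\xi\eta)(Y)X=0$: the $\ell_\xi X$, $\ell_\xi Y$ terms you say ``arise precisely from $\ell_\xi$ hitting the $X$ and $Y$ slots of $R(X,Y)\xi$'' cancel against the derivatives of the coefficients $\eta(X)$, $\eta(Y)$, so the block $\eta(U)\eta(Z)\{\eta(X)\ell_\xi Y-\eta(Y)\ell_\xi X\}$ cannot be produced this way. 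What the expansion actually produces is $(\ell_\xi R)(X,\xi)Z=(\ell_\xi g)(X,Z)\xi=2\{g(X,Z)-\eta(X)\eta(Z)\}\xi$, contributing $2\eta(U)\{\eta(Y)g(X,Z)-\eta(X)g(Y,Z)\}\xi$, a term that (\ref{42}) does not list. In short: the strategy is the right one, but the one step you claim to have checked (``direct substitution \dots\ gives exactly the final block'') is false, and the steps you declare routine, when carried out, correct the statement rather than confirm it.
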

Now let $X$, $Y$, $Z$, $U$, $V$ be arbitrary vector fields on $M$.\\
 Now we compute $(R(\phi ^2U, \phi ^2V).R)(\phi ^2X, \phi ^2Y)\phi ^2Z$
in two different ways. Firstly from (\ref{39i}), (\ref{21}) and (\ref{42}) we get
\begin{equation} \begin{array}{rcl} &&(R(\phi ^2U,\phi ^2V).R)(\phi ^2X,\phi ^2Y)\phi ^2Z\\
&=&\{(\nabla_U R)(X,Y,Z,V)-(\nabla_V R)(X,Y,Z,U)\}\xi\\
&+&\{\eta (U)\eta \{(\nabla_V R)(X,Y)Z)\}-\eta (V)\eta \{(\nabla_U R)(X,Y)Z)\}\}\xi\\
&-&\eta (X)\{H(Y,U,Z,V)-H(Y,V,Z,U)\}\xi \\
&+& \eta (Y)\{H(X,U,Z,V)-H(X,V,Z,U)\}\xi \\
&+&\eta (Z)\{H(X,Y,U,V)-H(X,Y,V,U)\}\xi\\
&+&\eta (X)\eta (Z)\{\eta(U)g(\ell _\xi Y,V)-\eta (V)g(\ell _\xi Y,U)\}\xi \\
&-&\eta (Y)\eta (Z)\{\eta (U)g(\ell _\xi X,V)-\eta (V)g(\ell _\xi X,U)\}\xi \\
&+&2\{\eta (U)R(X,Y,Z,V)-\eta (V)R(X,Y,Z,U)\}\xi \\
&+&2\eta (Z)\eta (V)\{\eta (X)g(Y,U)-\eta (Y)g(X,U)\}\xi \\
&-&2\eta (Z)\eta (U)\{\eta (X)g(Y,V)-\eta (Y)g(X,V)\}\xi , \end{array}\label{43} \end{equation}
where $H(X,Y,Z,U)=g(H_1(X,Y)Z,U)$ and the tensor field $H_1$ of type (1, 3) is given by (\ref{4.66})\\
 Secondly we have
\begin{equation} \begin{array}{rcl}&& (R(\phi ^2U,\phi ^2V).R)(\phi ^2X,\phi ^2Y)\phi ^2Z=R(\phi ^2U,\phi ^2V)R(\phi ^2X,\phi ^2Y)\phi ^2Z\\
&-&R(R(\phi ^2U,\phi ^2V)\phi ^2X,\phi ^2Y)\phi ^2Z-R(\phi ^2X,R(\phi ^2U,\phi ^2V)\phi ^2Y)\phi ^2Z\\
&-&R(\phi ^2X,\phi ^2Y)R(\phi ^2U,\phi ^2V)\phi ^2Z.\end{array}\label{44}  \end{equation}
By straightforward calculation from (\ref{44}) we get
\begin{equation} \begin{array}{rcl} &&(R(\phi ^2U,\phi ^2V).R)(\phi ^2X,\phi ^2Y)\phi ^2Z\\
&=&-(R(U,V).R)(X,Y)Z\\
&+&\eta (X)\{\eta (V)H_1(U,Y)Z-\eta (U)H_1(V,Y)Z\}\\
&+&\eta (Y)\{\eta (V)H_1(X,U)Z-\eta (U)H_1(X,V)Z\}\\
&+&\eta (Z)\{\eta (V)H_1(X,Y)U-\eta (U)H_1(X,Y)V\}\\
&+&\{\eta (V)g(H(X,Y,Z,U)-\eta (U)g(H(X,Y,Z,V)\}\xi ,
\end{array}\label{45}  \end{equation}
where $H(X,Y,Z,U)=g(H_1(X,Y)Z,U)$ and the tensor field $H_1$ of type (1, 3) is given by (\ref{4.66})\\
From (\ref{43}) and (\ref{45}) we obtain
\begin{equation} \begin{array}{rcl} && (R(U,V).R)(X,Y)Z\\
&=&[-(\nabla_U R)(X,Y,Z,V)+(\nabla_V R)(X,Y,Z,U)]\xi\\
&+&[\eta (V)\eta \{(\nabla_U R)(X,Y)Z)\}-\eta (U)\eta \{(\nabla_V R)(X,Y)Z)\}]\xi\\
&+&\eta (X)[\{H(Y,U,Z,V)-H(Y,V,Z,U)\}\xi+\eta (V)H_1(U,Y)Z-\eta (U)H_1(V,Y)Z]\\
&-& \eta (Y)[\{H(X,U,Z,V)-H(X,V,Z,U)\}\xi-\eta (V)H_1(X,U)Z+\eta (U)H_1(X,V)Z]\\
&-&\eta (Z)[\{H(X,Y,U,V)-H(X,Y,V,U)\}\xi+\eta (V)H_1(X,U)Z-\eta (U)H_1(X,V)Z]\\
&+&\{\eta (V)H(X,Y,Z,U)-\eta (U)H(X,Y,Z,V)\}\xi\\
&-&2\{\eta (U)R(X,Y,Z,V)-\eta (V)R(X,Y,Z,U)\}\xi \\
&+&.\{\eta (U)(\ell _\xi R)(X,Y,Z,V)-\eta (V)(\ell _\xi R)(X,Y,Z,U)\}\xi\\
&-&\eta (Z)\eta (X)\{\eta (U)g(\ell _\xi Y,V)-\eta (V)g(\ell _\xi Y,U)\}\xi \\
&+&\eta (Z)\eta (Y)\{\eta (U)g(\ell _\xi X,V)-\eta (V)g(\ell _\xi X,U)\}\xi \\
&-&2\eta (Z)\eta (V)\{\eta (X)g(Y,U)-\eta (Y)g(X,U)\}\xi \\
&+&2\eta (Z)\eta (U)\{\eta (X)g(Y,V)-\eta (Y)g(X,V)\}\xi . \end{array}\label{46} \end{equation}
Thus in a locally $\phi $-semisymmetric Kenmotsu manifold the relation (\ref{46}) holds for all arbitrary vector fields
$X$, $Y$, $Z$, $U$, $V$ on $M$. Next if the relation (\ref{46}) holds in a Kenmotsu manifold, then for any horizontal vector field
$X$, $Y$, $Z$, $U$, $V$ on $M$, we get the relation (\ref{39i}) and hence the manifold is locally $\phi $-semisymmetric. \\
Thus we can state the following:
\begin{theorem} A Kenmotsu manifold $M$ is locally $\phi $-semisymmetric if and only if the relation (\ref{46}) holds for any
arbitrary vector field $X$, $Y$, $Z$, $U$, $V$ on $M$.\end{theorem}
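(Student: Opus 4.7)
The plan is to establish the equivalence by computing the tensor
$(R(\phi^{2}U,\phi^{2}V)\cdot R)(\phi^{2}X,\phi^{2}Y)\phi^{2}Z$
for arbitrary vector fields $X,Y,Z,U,V$ in two independent ways, and then to match the two resulting expressions. Since $\phi^{2}W=-W+\eta(W)\xi$ for every $W$, the five arguments $\phi^{2}U,\phi^{2}V,\phi^{2}X,\phi^{2}Y,\phi^{2}Z$ are horizontal, so any identity known on horizontal inputs may be applied to them; the task is to transcribe the resulting relation back into one involving $U,V,X,Y,Z$ themselves.

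For the first computation, assuming local $\phi$-semisymmetry, I would invoke Theorem 3.2 to apply (\ref{39i}) with all arguments replaced by their $\phi^{2}$-images. To re-express the right-hand side in terms of the original vector fields I would then use Lemma 4.3 to rewrite each occurrence of $(\nabla_{\phi^{2}U}R)(\phi^{2}X,\phi^{2}Y,\phi^{2}Z,\phi^{2}V)$ as $(\nabla_{U}R)(X,Y,Z,V)$ plus the explicit $\eta$-correction terms, which involve the auxiliary tensor $H_{1}$ defined in (\ref{4.66}) together with the Lie-derivative quantities $(\ell_{\xi}R)$ and $\ell_{\xi}X,\ell_{\xi}Y$. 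Collecting and antisymmetrising in $U,V$ yields (\ref{43}).

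For the second computation, I would expand $(R(\phi^{2}U,\phi^{2}V)\cdot R)(\phi^{2}X,\phi^{2}Y)\phi^{2}Z$ directly using the derivation property, giving the four-term expansion (\ref{44}). Each summand is a composition of two curvature operators in which every slot carries a $\phi^{2}$. Applying Lemma 4.2 slot-by-slot reduces each composite $R(\phi^{2}\cdot,\phi^{2}\cdot)\phi^{2}\cdot$ to the original $R(\cdot,\cdot)\cdot$ plus $\eta$-terms again expressible through $H_{1}$. Assembling the four terms produces (\ref{45}), whose leading contribution is $-(R(U,V)\cdot R)(X,Y)Z$. Equating (\ref{43}) and (\ref{45}) and isolating $(R(U,V)\cdot R)(X,Y)Z$ immediately gives the desired identity (\ref{46}), proving the ``only if'' direction.

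For the converse, suppose (\ref{46}) holds for arbitrary vector fields, and restrict to horizontal $X,Y,Z,U,V$. Every $\eta$-factor then vanishes, the long string of correction terms collapses, and (\ref{46}) reduces exactly to (\ref{39i}); Theorem 3.2 therefore forces local $\phi$-semisymmetry. The main obstacle will be the bookkeeping of the $\eta$-correction terms in the two expansions: one must track which slot of $H_{1}$ each $\eta$-factor attaches to, reconcile the Lie-derivative contributions coming from Lemma 4.3 with those implicit in the direct derivation expansion, and verify that the $H$- and $H_{1}$-terms in (\ref{43}) and (\ref{45}) combine cleanly into the coefficients appearing in (\ref{46}).
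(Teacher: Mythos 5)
Your proposal is correct and follows essentially the same route as the paper: computing $(R(\phi^{2}U,\phi^{2}V)\cdot R)(\phi^{2}X,\phi^{2}Y)\phi^{2}Z$ once via (\ref{39i}) and Lemma 4.3 to obtain (\ref{43}), and once via the derivation expansion (\ref{44}) and Lemma 4.2 to obtain (\ref{45}), then equating the two to derive (\ref{46}). The converse by restricting to horizontal fields so that all $\eta$-terms vanish and (\ref{46}) collapses to (\ref{39i}) is also exactly the paper's argument.
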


\hspace{10pt}

\hspace{10pt}
\end{document}